\font\smallit=cmti10
\renewcommand\section{\@startsection {section}{1}{\z@}
{-30pt \@plus -1ex \@minus -.2ex}
{2.3ex \@plus.2ex}
{\normalfont\normalsize\bfseries}}
\renewcommand\subsection{\@startsection{subsection}{2}{\z@}
{-3.25ex\@plus -1ex \@minus -.2ex}
{1.5ex \@plus .2ex}
{\normalfont\normalsize\bfseries}}
\renewcommand{\@seccntformat}[1]{\csname the#1\endcsname. }
\begin{document}

\begin{center}
\uppercase{\bf On Generalized Addition Chains}
\vskip 20pt
{\bf Yara Elias\footnote{Supported by a research scholarship of the Fonds qu\'eb\'ecois de la recherche sur la nature et les technologies.}}\\
{\smallit Department of Mathematics, McGill University, Montreal, Quebec, Canada}\\
{\tt yara.elias@mail.mcgill.ca}\\
\vskip 10pt
{\bf Pierre McKenzie\footnote{Supported by DIGITEO and
    by the Natural Sciences
    and Engineering Research Council of Canada.}}\\ 
{\smallit D\'ep.\ d'informatique et de rech.\ op\'erationnelle, U.\ de
  Montr\'eal, Quebec, Canada\\and Chaire Digiteo ENS Cachan-École Polytechnique, France}\\ 
{\tt mckenzie@iro.umontreal.ca}\\ 
\end{center}

\newtheorem{theorem}{Theorem}[section]
\newtheorem{lemma}[theorem]{Lemma}
\newtheorem{corollary}[theorem]{Corollary}
\newtheorem{claim}[theorem]{Theorem}
\newtheorem{introduction}[theorem]{Introduction}
\newtheorem{proposition}[theorem]{Proposition}
\theoremstyle{definition}
\newtheorem{fact}[theorem]{Fact}
\newtheorem{conjecture}[theorem]{Conjecture}
\newtheorem{definition}[theorem]{Definition}
\newtheorem{example}[theorem]{Example}
\newtheorem{remark}[theorem]{Remark}
\newtheorem{question}[theorem]{Question}
\newtheorem{notation}[theorem]{Notation}
\newtheorem{algorithm}[theorem]{Algorithm}
\newcommand{\pgcd}{\mathrm{PGCD}}

\centerline{\bf Abstract}

\noindent
Given integers $d \geq 1,$ and $ g \geq 2$, a $g$-addition chain for $d$ is a
sequence of integers $a_0=1, a_1, a_2,\cdots, a_{r-1}, a_r=d$ where
$a_i=a_{j_1}+a_{j_2}+\cdots +a_{j_k},$ with $2 \leq k \leq g,$ and $0 \leq j_1
\leq j_2 \leq \cdots \leq j_k \leq i-1$. The length of a $g$-addition
chain is $r$, the number of terms following 1 in the sequence.
We denote by $l_g(d)$ the length of a shortest addition chain for
$d$. Many results have been established in the case $g=2$. Our aim is
to establish the same sort of results for arbitrary fixed $g$.
In particular, we adapt methods for constructing
$g$-addition chains when $g=2$ to the case $g>2$ and
we study the asymptotic behavior of $l_g$.

\section{Introduction}
Given integers $d \geq 1,$ and $ g \geq 2$, a $g$-addition chain
for $d$ is a sequence of integers  
\begin{align*}
& a_0=1, a_1, a_2,\cdots, a_{r-1}, a_r=d \end{align*}
where $a_i=a_{j_1}+a_{j_2}+\cdots +a_{j_k},$ with $ 2 \leq k \leq g, \hbox{ and }0 \leq j_1 \leq j_2 \leq \cdots \leq j_k \leq i-1.$
The length of a $g$-addition chain is $r$, the number of terms
following 1 in the
sequence. We denote by $l_g(d)$ the length of a shortest
addition chain for $d$.

Knuth~\cite{Kn69} attributes the first mention of the problem of
determining $l_2(d)$ to H.~Dellac in 1894.
Knuth also reports that
E.~de Jonqui\`eres in 1894 applied what is now known as the factor
method
to the computation of $2$-addition chains.
The term \emph{addition chain} itself,
meaning $2$-addition chain, was coined
and formally defined in 1937 by Scholz~\cite{Sc37}.
While many conjectures (and theorems!) concerning addition chains
rose and fell over the years, the celebrated 1937 Scholz-Brauer
conjecture, claiming that $l_2(2^n-1)\leq n-1+l_2(n)$,
remains open today.

The Scholz-Brauer conjecture and the intriguing behavior of the $l_2$
function led to an abundant literature on addition chains.
Knuth~\cite[Section 4.6.3]{Kn69} is a careful source of facts and historical
details covering the period up to 1973.
Further developments, including world records and a
bibliography reaching until 2008, can be found at \cite{Fl11}.

To the best of our knowledge, none of the above literature considers
$g$-addition chains for $g>2$.
We begin investigating such ``generalized'' addition
chains here.
Specifically,
Section~\ref{sect:construct}
describes three algorithms to generate $g$-addition chains.
In Section~\ref{sect:asymptotic}, we establish upper and lower bounds
on $l_g(d)$ and we bound
the main term and the error term in the
asymptotic behavior of $l_g(d)$.
Section~\ref{sect:conclusion} concludes by
recalling the algebraic complexity theory context in which
the study of addition chains can be cast and by listing
open questions and suggestions for future work.

When $\varsigma$ is a sequence of integers $i_1,\ldots,i_j$ and $m$ is an
integer, we let $m\cdot \varsigma$ stand for
$m\cdot i_1,\ldots,m\cdot i_j$.
We also adopt the following notation:\\
\begin{tabular}{r p{12cm} }
$\lfloor x \rfloor$ & floor of $x$ \\
$\lceil x \rceil$ & ceiling of $x$ \\
$\lambda_g(n)$ & $\lfloor \log_g(n) \rfloor$ \\
$ \mu_g(n)$ & number of nonzero digits in the representation of $n$
in basis $g$ \\
$l_g(n)$ & length of a minimal $g$-addition chain for $n$ \\
$o(1)$ & function $f: \mathbb{N} \rightarrow \mathbb{R}$ such that $f(n)$ goes to 0 when $n$ goes to infinity.
\end{tabular}

\section{Construction of generalized addition chains}\label{sect:construct}

In this section, we extend three methods used to generate $2$-addition chains
for the generation of $g$-addition chains, $g\geq 2$.
We then compare the performances of the methods on selected infinite
families of integers.\\

\newcommand{\fac}{\ensuremath{\mathsf{fac}}}

\subsection{The factor method}

For every $g\geq 2$ and $d\geq 1$,
our extension to the factor method for $2$-addition chains~\cite{Kn69}
produces a unique $g$-addition chain.
This chain is obtained by crossing out duplicates from the sequence $\fac[d]$, defined by induction on $d$ as
\begin{displaymath} 
\begin{cases}
1,\ d & \text{if $d\leq g$,}\\
\fac\left[\frac{d - (d \text{ mod } g)}{g}\right],\ d - (d \text{
    mod } g),\ d & \text{else, if $d$ is prime,}\\
\fac[p_1p_2\cdots p_i],\
(p_1p_2\cdots p_i) \cdot
\fac[p_{i+1}p_{i+2}\cdots p_m]& \text{otherwise}
\end{cases}
\end{displaymath}
where the prime factorization of $d$ is
$p_1p_2\cdots p_m$ with $p_1\leq p_2\leq \cdots \leq p_m$ in the last
case and $i$ is the minimum $j$ such that
$p_1p_2\cdots p_j \geq g$,
unless $j=m$, in which case $i$ is set to $j-1$.

Clearly $\fac[d]$ is well defined.
Note that in the second case,
$d - (d \text{ mod } g)$ is obtained in one step by summing $g$
occurrences of $\frac{d - (d \text{ 
      mod } g)}{g}$; then $d$ is obtained by adding 
$d -  (d \text{ mod } g)$ to $(d \text{ mod } g)$ occurrences of $1$.
In the third case, 
$(p_1p_2\cdots p_i) \cdot
\fac[p_{i+1}p_{i+2}\cdots p_m]$ is obtained by applying the steps defining
the chain for $p_{i+1}p_{i+2}\cdots p_m$ starting from the last number
$p_1p_2\cdots p_i$ of the chain 
obtained for $p_1p_2\cdots p_i$.

When $g=2$, the above method precisely
reduces to the factor method
described in \cite{Kn69}.
We note that the second case in our generalized method exploits the
insight that when $g>2$, merely computing $\fac[d-(g-1)]$ and then $d$
would fail to ensure division by $g$ in the recursive step.
Finally, we note that a possible improvement in the third case would be to
order the prime factors of $d$ in such a way as to
bring $p_1p_2\cdots p_{j}$ closest to $g$.

\begin{example} \label{example1}
Consider $d=(g+1)^2$, where $g+1=p_1^{\alpha_1} \cdots p_k^{\alpha_k}$
is the prime decomposition of $g+1$
with $p_1<\cdots < p_k$.
Assume first that $k>1$.
Then
$d=p_1^{2\alpha_1}\cdots p_k^{2\alpha_k}$.
So the factor method
induces the $g$-addition chain $$
\underbrace{1,\ p_1^{2\alpha_1}\cdots
p_i^{\beta_{i}-1},\ p_1^{2\alpha_1}\cdots p_i^{\beta_i}}_{
\fac[p_1^{2\alpha_1}\cdots p_i^{\beta_i}]}\ ,\ \
p_1^{2\alpha_1}\cdots p_i^{2\alpha_i}\cdots p_k^{2\alpha_k}$$
where $i$ and $0<\beta_i\leq 2 \alpha_i$ are the smallest integers such that
$p_1^{2\alpha_1}\cdots p_i^{\beta_i} \geq g$.
Indeed since $g$ does
not divide $(g+1)^2$, we have
$p_1^{2\alpha_1}\cdots p_i^{\beta_i} \geq g+1$.
Also, since $k>1$, $p_1^{\alpha_1+1}$ divides $p_1^{2\alpha_1}\cdots
p_i^{\beta_i}$, so $p_1^{2\alpha_1}\cdots p_i^{\beta_i} \neq g+1$.
Hence $p_1^{2\alpha_1}\cdots p_i^{\beta_i} \geq g+2$.
Therefore, since $\dfrac{(g+1)^2}{g+2} < g+1$, i.e $\dfrac{(g+1)^2}{g+2} \leq g$, we have $p_i^{2 \alpha_i -\beta_i} \cdots p_k^{2 \alpha_k} \leq g$.
So that the induced addition chain has length 3.
Note that when $g$ is prime, the factor method produces a $g$-addition chain of length at least $e+3$ for $g^e(g+1)^2$. 

In the case $k=1$, the factor method induces the $g$-addition chain $$1,p_k^{\alpha_k-1},p_k^{\alpha_k},p_k^{2\alpha_k-1},p_k^{2\alpha_k} \hbox{ if } \alpha_k>1,$$ and $$1,p_k-1, p_k, p_k(p_k-1),p_k^2 \hbox{ if } \alpha_k=1.$$ 
Note that both are of length 4.
\end{example}
\begin{example} \label{example2}
Consider $d=g^2$, where $g=p_1^{\alpha_1} \cdots p_k^{\alpha_k} $ is
the prime decomposition of $g $ with $p_1< \cdots < p_k$ and assume
that $k>1$. Then the factor method induces the $g$-addition chain $$1,
p_1^{2\alpha_1}\cdots p_i^{\beta_{i-1}},p_1^{2\alpha_1}\cdots
p_i^{\beta_i}, p_1^{2\alpha_1}\cdots p_i^{2\alpha_i}\cdots
p_k^{2\alpha_k}$$ where $i$ and $0<\beta_i \leq 2 \alpha_i$ are the
smallest integers such that $p_1^{2\alpha_1}\cdots p_i^{\beta_i} \geq
g$.
Also, since $k>1$, and thus $p_1^{\alpha_1}<g$, we have that $p_1^{\alpha_1+1}$ divides $p_1^{2\alpha_1}\cdots p_i^{\beta_i}$, so $p_1^{2\alpha_1}\cdots p_i^{\beta_i} \neq g$.
Hence $p_1^{2\alpha_1}\cdots p_i^{\beta_i} > g$.
This addition chain has length 3.

Note that in fact, $d=g^{e+2}$ requires at least $3+e$ steps.
Indeed, the first iteration of the algorithm of the factor method
produces $$1, \fac[q_1], q_1 \cdot \fac\left[\frac{d}{q_1}\right]$$ for some $q_1$
where $g< q_1 \leq g p_k$. Since $q_1>g$, we know that $\fac[q_1]$ contributes at
least $2$ to the
length of the chain. Now applying the
algorithm
to $\dfrac{d}{q_1}$ produces $$1,
\fac[q_2],  q_2 \cdot \fac\left[\frac{d}{q_1q_2}\right],$$ for some $q_2$ where $g<
q_2 \leq g p_k$. Since $q_2>g$, we know that $q_1\cdot \fac[q_2]$ contributes at
least another 2 terms
to the chain. We can repeat
this argument at least $$log_{gp_k}g^{2+e}=\frac{log_g g^{2+e}}{log_g
  gp_k }>\frac{2+e}{2}$$ times, where each time, the length of the
chain increases by 2 at least. Therefore, the
final $g$-addition chain has length at least $3+e$.

When $k=1$, the method induces the $g$-addition chain
$1,p_k^{\alpha_k},p_k^{2\alpha_k}$ of length $2$.
\end{example}

\subsection{The $m$-ary method}

The $m$-ary method consists of expressing $d$ as $ d= d_km^k + \cdots
+ d_1m + d_0$, where $0 \leq d_i < m$ for $0\leq i\leq k=\lfloor
\log_md\rfloor$, and 
  appending to $1$ the sequence
 $$ m, d_km,
  d_km+d_{k-1}, (d_km+d_{k-1})m, d_km^2+d_{k-1}m+ d_{k-2},\cdots,
  (d_km^{k-1}+\cdots +d_1)m,d$$ 
of length at most $\lfloor \log_md\rfloor + \mu_m(d)$ when $m<g$.
When $m\geq g$, the method begins with
$1,d_k \text{ (if $1<d_k<g$)}, g,g+1,g+2,\ldots,m-1$
and appends instead
$$ d_k \cdot \varsigma,$$
$$d_km+d_{k-1},$$
$$ (d_km+d_{k-1}) \cdot \varsigma,$$
$$d_km^2+d_{k-1}m+d_{k-2},$$
$$\vdots$$
$$(d_km^{k-1}+\cdots +d_1)\cdot \varsigma,$$
$$d$$
where $\varsigma$ is a fixed $g$-addition chain for $m$.
Only the digits $d_i$ that are non-zero contribute a
``non-$\varsigma$'' step to the above sequence.
Given an optimal $\varsigma$,
the length of the sequence produced when $m>g$ is thus at most
\begin{eqnarray} \label{eqn:upper}
(m-g+1) + \lfloor \log_md\rfloor l_g(m) + (\mu_m(d)-1).
\end{eqnarray}
Noting that $\ell_g(g^r)=r$ for $r\geq 1$,
the expression (\ref{eqn:upper}) becomes
\begin{eqnarray} \label{eqn:powerupper}
m -g + \lfloor \log_md\rfloor \log_g(m)+ \mu_m(d)
\ \ \leq \ \
m -g + \lfloor \log_gd\rfloor + \mu_m(d)
\end{eqnarray}
in the important special case in which $m$ is a power of $g$.

As finer optimizations,
since adding $d_i<g$ to any
number $A$ can be done from $1$ and $A$ in a single $g$-addition chain
step,
we note that
among the initial $g,g+1,\ldots,m-1$, only numbers that occur as $d_i$
for some $i$ need be produced explicitly.
We note also that expression (\ref{eqn:upper}) can be reduced by $1$
if $d_k=1$ or $d_k\geq g$.

When $g=2$, this method is the same as the $m$-ary method described in
\cite{Kn69}.

\begin{example} \label{example3}
Consider $d=g^k(g+1)^2=g^k(g^2+2g+1)$.
The $g$-ary method induces the following $g$-addition chain, of
length $k+4$:
$$1,g,g+2,g^2+2g,g^2+2g+1,g(g^2+2g+1), \cdots, g^k(g^2+2g+1).$$
\end{example}
\begin{example} \label{example4}
Consider $d=g^{2+k}(2g+1)=2g^{3+k}+g^{2+k}$.
The $g$-ary method induces the following $g$-addition chain, of length
$k+5$:
$$1,2,2g,2g+1,2g^2+g,\cdots,2g^{3+k}+g^{2+k}.$$
\end{example}
Note that multiplying an integer $d$ by $g^k$ extends its
$g$-addition chain obtained by the $g$-ary method by $k$ elements.

\subsection{The tree method}
The tree method consists of drawing a tree, with root 1 and integer
nodes such that the path from the root to the integer $d$ constitutes
a $g$-addition chain for $d$. Let $M_n$ be the set of sums of
$m$-tuples of $\{ 1, a_2, \cdots, a_{k-1}=n \}$, with $2\leq m \leq g$,
where $1, a_2, \cdots, a_{k-1}=n$ is the path from the root to the
node $n$.
At level $k+1$, from left to right, we attach in increasing order,
omitting elements already in the tree, under each element $n$ of the
preceding level $k$, the elements of $M_n$.
When $g=2$, this method is the same as the tree method described in
\cite{Kn69}.

\begin{remark}
In the following example, we solely use the argument that if an integer $d$ is at the level $k$ of the tree, then the integer $gd$ is at worst at the level $k+1$ of the tree.
\end{remark}

\begin{example}\label{example5}
Consider $d=g^2(2g+1)$.
From the tree generated by the tree method, we see that $g$
belongs to level 2, so $2g+1$ belongs to level 3.
Hence $g(2g+1)$ is at worst at level 4, and $g^2(2g+1)$ is at worst at
level 5.
So the length of the induced addition chain is at most 4. 

As the number of steps in the $g$-addition chain for $gn$ using the
tree method is at most the one for $n$ plus one, the tree method
induces a $g$-addition chain of length at most $4+k$ for
$d=g^{2+k}(2g+1)$.
\end{example}

\subsection{Comparison of methods}

Table~\ref{tab:comparisons} summarizes the relative performances of our
three methods on selected families of integers.
The rows in the Table are justified next.
\begin{table}[H] 
\centering
\begin{tabular}{|c|c|c|}
\hline
Compared methods & Property of $g$ & Witness Element/Family \\
\hline
factor  $>$ $g$-ary  & $g+1$ not a power of a prime & $(g+1)^2$ \\
\hline
factor  $>$ $g$-ary  & $g>2$ prime, $g+1$ not a power of 2 & $g^k(g+1)^2$ \\
\hline
factor  $>$ $g$-ary  & $g+1=p^{\alpha}, g>2,\ p$ prime & $2p^{2\alpha}$ \\
\hline
$g$-ary  $>$ factor  && \\
tree  $>$ factor & $g$ not a power of a prime & $g^{2+k}$ \\
\hline
$g$-ary  $>$ factor  && \\
tree  $>$ factor & $g=p^{\alpha},\ p>2$ prime, $\alpha>1$ & $2p^{k \alpha+1}$ \\  
\hline
$g^2$-ary  $>$ factor && \\
tree  $>$ factor & $g$ prime & $(p-1)^2p^{2k}$ \\  
\hline
tree  $>$ $g$-ary  & & $g^{2+k}(2g+1)$ \\

\hline

\end{tabular}
\caption{Comparisons of methods, with ``$A > B$'' shorthand for
  ``method $A$ is strictly more efficient 
  than method $B$''; even when $g=2$, no infinite family seems known
  for which the tree method is systematically outperformed by another
method.} \label{tab:comparisons}
\end{table}

Rows 1 and 2 follow from comparing Examples~\ref{example1}
and~\ref{example3} seen in previous sections;
chain lengths are $3<4$ and $k+3<k+4$ respectively.
For row 3, consider $g=p^{\alpha}-1$, with $p>2$ prime, and $g>2$.
Let $d=2p^{2\alpha}=2g^2+4g+2.$
Then the $g$-ary method induces the $g$-addition chain $$1, 2, 2g,
2g+4,2g^2+4g, 2g^2+4g+2$$ of length 5 while the factor method induces
the shorter $g$-addition chain $1, 2p^{\alpha-1},2p^{\alpha},$ $2p^{2
  \alpha -1},$ $2p^{2 \alpha}$ of length 4.

For rows 4, 5 and 6,
note that the tree method is never worse than the $g$-ary method.
Hence in each row, the second line follows from the first.
Row 4 follows from
the fact that the $g$-ary method induces for $g^{2+k}$, where $k \geq 0$, the
$g$-addition chain $1,g,\cdots,g^{2+k}$ of length $2+k$,
shorter than the chain of length $3+k$ obtained in
Example~\ref{example2} by the factor method.
For row 5, consider $g=p^{\alpha},$ with $p>2$ prime, and $\alpha>1$.
Let $d=2p^{k\alpha+1}$, with $k \geq0$.
The $g$-ary method induces the chain $$1, 2p, 2p p^{\alpha}, \cdots,
2pp^{k\alpha}$$ of length $k+1$, while the factor method induces the
longer chain $$1, 2 p^{\alpha-1},2p^{\alpha},2p^{2\alpha},\cdots,2p^{k
  \alpha},2p^{k \alpha+1}$$ of length $k+2$.
For row 6, let $g=p$, with $p>2$ prime and consider $d=(p-1)^2p^{2k},$ where $ k \geq 0$.
The $p^2$-ary method induces the $g$-addition chain
$$1,p-1,(p-1)^2,p(p-1)^2, p^2(p-1)^2,p^3(p-1)^2,\cdots,p^{2k}(p-1)^2$$
of length $2+2k$.
The factor method induces a longer $g$-addition chain of length at
least $3+2k$.
Indeed, since $p-1$ is even, the first iteration of
the inductive algorithm of the factor method for $d$ produces $2^2q$,
where $q$ is a divisor of $(\frac{p-1}{2})^2$ such that $2^2q\geq
p$.
Now $p$ does not divide $(\frac{p-1}{2})^2$ so $2^2q > p$, therefore the factor method requires two steps to produce $2^2q$.
Also, $q \neq(\frac{p-1}{2})^2$. Indeed, assume $\frac{p-1}{2}$
divides $q$.
Since $2^2 \frac{p-1}{2} >p$, $q$ would have to be equal
to $\frac{p-1}{2}$.
Hence, the $p^2$-ary method produces a $g$-addition chain of length
$2+2k$ for $d=(p-1)^2p^{2k}$, shorter than the one of length at least
$3+2k$ produced by the factor method.

To justify row 7, we combine examples
  \ref{example4} and \ref{example5} and deduce that for each $g$,
  there is an infinite set of integers $d$ of the form
  $g^{2+k}(2g+1)$, where $k \geq 1$, such that the tree method induces a
  $g$-addition chain of length at most $4+k $ shorter than the one by
  the $g$-ary method of length $k+5$.

\subsection{Practical issues}

Suppose that $g\geq 2$ is a fixed integer.
As Theorem~\ref{claim:upper} below makes clear, the $m$-ary method
with $m=g$ implies that
the length of an optimal $g$-addition chain for a number $d$ is no
longer than twice $\log_g(d)$.
Two computational problems thus arise:
\begin{quote}
Given $d$ in binary or decimal notation, compute\\
(1) an optimal $g$-addition chain for $d$\\
(2) a $g$-addition chain for $d$ no longer
than twice the optimal. 
\end{quote}

In complexity theory, efficiency as a first approximation is taken to
mean ``the existence of an algorithm 
that runs in time bounded by some 
polynomial in terms of the problem input length''.
At present, no efficient algorithm is known to
solve problem (1) even when $g=2$.

But we note that problem (2) is solved efficiently by the $m$-ary method
(Sketch: efficient arithmetic to compute the $g$-ary representation
of $d$ from its binary or decimal expansion is well
known~\cite{Kn69}, and a straighforward implementation of the
method involves a polynomial number
of further arithmetic operations.)
On the other hand, the factor method, if it solves problem (2) at all,
is inefficient
because it repeatedly requires factoring numbers
(applied to a number $d$ having all its prime factors larger than $g$,
the method would actually factor $d$ on the fly), for which no
efficient algorithm is currently known.
For its part, the tree method does solve problem (2), but inefficiently
because it potentially examines every 
number less than $d$, hence exponentially many numbers in terms of the
number of digits in the binary or decimal expansion of $d$.

\section{Asymptotic behavior of $l_g(d)$ }\label{sect:asymptotic}

For any $g\geq 2$ and $d\geq 1$, we have $l_g(d) \leq l_2(d) \leq
(g-1)l_g(d)$.
Coarse asymptotic upper bounds on $l_g(d)$ thus follow from
known bounds on $l_2(d)$.
Such coarse bounds vastly overestimate $l_g(d)$ however.
In this section, we provide finer bounds that capture its asymptotic behavior.

Theorem~\ref{claim:upper} and Proposition~\ref{prop:mn}
are straightforward adaptations of the reasoning
for $g=2$.

\begin{claim} \label{claim:upper}
For all $d$ $ \in $ $ \mathbb{N} $,
$$\lceil \log_g d \rceil \leq l_g(d) \leq  \lfloor \log_gd \rfloor
+\mu_g(d) .
$$
\end{claim}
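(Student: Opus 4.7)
I plan to treat the two inequalities separately, both by elementary means. For the lower bound $\lceil \log_g d \rceil \leq l_g(d)$, I would show by induction on $i$ that any $g$-addition chain $1=a_0,a_1,\ldots,a_r$ satisfies $a_i \leq g^i$. The base case $a_0 = 1 = g^0$ is clear, and the inductive step follows directly from the definition: $a_i$ is a sum of at most $g$ earlier terms, each bounded by $a_{i-1} \leq g^{i-1}$, giving $a_i \leq g\cdot g^{i-1}=g^i$. Applied at $i=r$ this yields $d\leq g^r$, so $r \geq \log_g d$, and since $r$ is an integer, $r \geq \lceil \log_g d\rceil$.

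For the upper bound, I would invoke the $m$-ary method of Section~\ref{sect:construct} specialized to $m=g$. Writing the base-$g$ expansion $d = d_k g^k + \cdots + d_0$ with $d_k \neq 0$ (so $k = \lfloor \log_g d\rfloor$), the construction proceeds by Horner's rule with $A_k=d_k$ and $A_{i}=A_{i+1}g+d_i$. Each multiplication $A_{i+1}\mapsto A_{i+1} g$ uses a single step that sums $g$ copies of $A_{i+1}$, contributing $k$ steps in total; whenever $d_i \neq 0$ for some $i<k$, adjoining $d_i \leq g-1$ copies of $1$ produces $A_{i+1}g + d_i$ in one further step with $1+d_i\leq g$ summands, contributing $\mu_g(d)-1$ additional steps; and at most one further initial step produces $d_k$ from $1$ when $d_k > 1$ (costing nothing when $d_k=1$). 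Summing, the total number of steps is at most $k + (\mu_g(d)-1) + 1 = \lfloor\log_g d\rfloor + \mu_g(d)$.

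The only mild obstacle is the off-by-one bookkeeping in the upper bound: one should check that combining a multiplication by $g$ with the addition of a nonzero digit $d_i$ into one $g$-step is genuinely impossible, since it would demand $g + d_i > g$ summands, so two steps per nonzero low-order digit are truly required; and that the slack present when $d_k = 1$ exactly absorbs the extra step needed when $d_k > 1$. Beyond these accounting remarks, no new ideas are needed beyond the extremal inequality $a_i \leq g\,a_{i-1}$ implicit in the definition of a $g$-addition chain and the analysis of the $g$-ary method already sketched in Section~\ref{sect:construct}.
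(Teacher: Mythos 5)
Your proposal is correct and follows essentially the same route as the paper: the lower bound via the inequality $a_i \leq g\,a_{i-1}$ (equivalently $a_i\leq g^i$) and the upper bound by invoking the $g$-ary method with $m=g$ as analyzed in Section~\ref{sect:construct}. Your Horner-rule bookkeeping just makes explicit the count the paper obtains by citing expression~(\ref{eqn:powerupper}).
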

\begin{proof}
Let $d\in \mathbb{N}$.
And let $ a_0=1, a_1, \ldots , a_r=d $ be a $g$-addition chain for $d$ of minimal length $ l_g(d)$.
For all $i$ such that $1\leq i\leq r$, we have $a_i\leq ga_{i-1}$.
Therefore, $d=a_r \leq g^r$, and hence
$ \log_gd \leq \log_g g^r = r = l_g(d)$.
Since $ l_g(d) $ is an integer, $ \lceil \log_gd \rceil \leq l_g(d)$.\\
To establish the upper bound, we use the $g$-ary method (with $m=g$).
We get a $g$-addition chain of length
$$l_g(d)  \leq  \lfloor \log_gd \rfloor +\mu_g(d)$$
as per expression (\ref{eqn:powerupper}).
\end{proof}
\begin{proposition}\label{prop:mn}
For all $m,n$ $ \in $ $ \mathbb{N} $, $l_g(mn) \leq l_g(m)+l_g(n)$.
\end{proposition}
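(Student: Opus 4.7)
The plan is to build a $g$-addition chain for $mn$ by concatenating an optimal chain for $m$ with the ``multiplied'' version of an optimal chain for $n$. This mirrors the standard trick for the $g=2$ case.

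First I would fix an optimal $g$-addition chain $1 = a_0, a_1, \ldots, a_{l_g(m)} = m$ for $m$ and an optimal $g$-addition chain $1 = b_0, b_1, \ldots, b_{l_g(n)} = n$ for $n$. Next I would form the sequence
\begin{displaymath}
1 = a_0, a_1, \ldots, a_{l_g(m)} = m = m b_0, m b_1, m b_2, \ldots, m b_{l_g(n)} = mn,
\end{displaymath}
which, using the notation from the introduction, is exactly the concatenation of the chain for $m$ with $m \cdot (b_1, \ldots, b_{l_g(n)})$. Its length (number of terms after $1$) is $l_g(m) + l_g(n)$.

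The only thing left is to verify the $g$-addition chain condition at each new term $m b_i$ with $i \geq 1$. By the definition of the chain for $n$, we may write $b_i = b_{j_1} + b_{j_2} + \cdots + b_{j_k}$ with $2 \leq k \leq g$ and $0 \leq j_1 \leq \cdots \leq j_k \leq i-1$. Multiplying through by $m$ gives $m b_i = m b_{j_1} + \cdots + m b_{j_k}$, and each $m b_{j_s}$ already appears earlier in the concatenated sequence (with $m b_0 = m = a_{l_g(m)}$). Hence the condition is satisfied with the same $k \leq g$, and the concatenation is a valid $g$-addition chain for $mn$ of length $l_g(m) + l_g(n)$, which yields $l_g(mn) \leq l_g(m) + l_g(n)$.

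There is no serious obstacle here; the only point requiring minimal care is the bookkeeping of indices and the observation that $m$ appears as both the last element of the first piece and as $m b_0$ at the start of the scaled second piece, so it is not double counted.
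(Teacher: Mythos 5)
Your proposal is correct and is essentially the paper's own proof: the paper also concatenates a minimal chain for $m$ with $m\cdot \varsigma$ where $\varsigma$ is a minimal chain for $n$. You simply spell out the index bookkeeping and the verification of the $g$-addition condition that the paper leaves implicit.
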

\begin{proof}
A $g$-addition chain for $mn$ is given by a $g$-addition chain for $m$
of length $l_g(m)$ followed by
$m\cdot \varsigma$ where $\varsigma$ is a $g$-addition chain for $n$ of length
$ l_g(n) $.
\end{proof}
The following definition respects the choice of nomenclature in the litterature for $g=2$.
\begin{definition} 
Step $i$ is a \textit{$g$-step} if $a_i = g a_{i-1}$.
\end{definition}

Adapting Brauer and Erd{\H{o}}s' developments in the case $g=2$, we
prove that the asymptotic main term of $l_g(n)$ is larger than $\lambda_g(n)+\dfrac{\lambda_g(n)}{8g log_eg\lambda_g(\lambda_g (n))}$ and smaller than $\lambda_g(n)+\dfrac{\lambda_g(n)}{\lambda_g( \lambda_g (n))}.$

\begin{claim} \label{claimasymptotique}
For all $g \geq 2$, we have $l_g(n) 
 \leq
\lambda_g(n)+(1+o(1))\dfrac{\lambda_g(n)}{\lambda_g( \lambda_g
  (n))}$. (This result is in \cite{Kn69} in the case $g=2$.)
\end{claim}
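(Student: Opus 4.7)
The plan is to apply the $m$-ary method with $m = g^k$ for a carefully chosen integer $k$, and to invoke inequality (\ref{eqn:powerupper}) directly. Two error terms are in tension: the precomputation cost $m - g$ and the truncation cost $\mu_m(n) \leq \lambda_g(n)/k + 1$. The whole game is to choose $k$ so that both fit inside the allotted $(1+o(1))\lambda_g(n)/\lambda_g(\lambda_g(n))$ error budget.

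Concretely, I would set
\[
k \;:=\; \bigl\lfloor \lambda_g(\lambda_g(n)) - 2\lambda_g(\lambda_g(\lambda_g(n))) \bigr\rfloor, \qquad m \;:=\; g^k.
\]
Writing $n$ in base $m$ and applying inequality (\ref{eqn:powerupper}) with this $m$ gives
\[
l_g(n) \;\leq\; m - g + \lambda_g(n) + \mu_m(n) \;\leq\; m + \lambda_g(n) + \tfrac{\lambda_g(n)}{k} + 1.
\]
It then remains to estimate $m$ and $\lambda_g(n)/k$ under the chosen $k$. Since $k = (1-o(1))\lambda_g(\lambda_g(n))$, one obtains $\lambda_g(n)/k = (1+o(1))\lambda_g(n)/\lambda_g(\lambda_g(n))$. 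On the other hand, using $g^{\lambda_g(x)} \leq x$ twice (once at $x = \lambda_g(n)$, once at $x = \lambda_g(\lambda_g(n))$), the subtraction of $2\lambda_g(\lambda_g(\lambda_g(n)))$ yields $m = g^k = O\!\bigl(\lambda_g(n)/\lambda_g(\lambda_g(n))^2\bigr) = o\!\bigl(\lambda_g(n)/\lambda_g(\lambda_g(n))\bigr)$. Adding the two contributions to $\lambda_g(n)$ gives the claimed bound.

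The main delicacy is precisely this choice of $k$: taking $k$ any larger would push $m = g^k$ above the error budget, while taking $k$ any smaller would push $\lambda_g(n)/k$ above $(1+o(1))\lambda_g(n)/\lambda_g(\lambda_g(n))$. The specific correction $-2\lambda_g(\lambda_g(\lambda_g(n)))$ (a triple-iterated logarithm slack) is tuned so that the precomputation $m$ becomes dwarfed by the target error while the leading behavior of $\lambda_g(n)/k$ is preserved. The rest is routine floor/ceiling bookkeeping, essentially mimicking the classical Brauer argument for $g=2$ reported in \cite{Kn69}.
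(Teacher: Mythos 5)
Your proposal is correct and follows essentially the same route as the paper's own proof: the same choice $k=\lfloor\lambda_g(\lambda_g(n))-2\lambda_g(\lambda_g(\lambda_g(n)))\rfloor$, the same appeal to expression (\ref{eqn:powerupper}) with $m=g^k$, and the same splitting of the error into the precomputation term $g^k=o\bigl(\lambda_g(n)/\lambda_g(\lambda_g(n))\bigr)$ and the digit-count term $(1+o(1))\lambda_g(n)/\lambda_g(\lambda_g(n))$. The only cosmetic difference is that the paper writes the digit term as $\tfrac{k+1}{k}\log_g n$ rather than $\lambda_g(n)/k+1$, which amounts to the same bookkeeping.
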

\begin{proof}
Let $m=g^k$ for any $k\geq 1$. 
Expression ~(\ref{eqn:powerupper}) implies that $$l_g(n) \leq m + \log_g n + \mu_m(n) \leq g^k + (k+1) \log_{g^k}n.$$
So the number of steps is bounded by $$g^k+(k+1)\log_gn
\dfrac{\log_gg}{\log_gg^k}=g^k+\dfrac{k+1}{k}log_gn.$$ 
Let $$k=\lfloor{\lambda_g(\lambda_g(n))-2\lambda_g(\lambda_g(\lambda_g(n)))} \rfloor.$$
Then, 
\begin{align*}
\lambda_g(n) \leq l_g(n)
&\leq g^{\lambda_g(\lambda_g(n))-2\lambda_g(\lambda_g(\lambda_g(n)))}+\left(1+\dfrac{1}{\lfloor\lambda_g(\lambda_g(n))-2\lambda_g(\lambda_g(\lambda_g(n)))\rfloor} \right) \log_g n \\
&\leq \log_g n +\dfrac{g^2\lambda_g(n)}{\lambda_g^2(\lambda_g(n))}+ \dfrac{\log_g n}{ \lfloor\lambda_g(\lambda_g(n))-2\lambda_g(\lambda_g(\lambda_g(n)))\rfloor}.
\end{align*}
We have $\dfrac{\log_g n}{\lfloor \lambda_g(\lambda_g(n))-2\lambda_g(\lambda_g(\lambda_g(n)))\rfloor}=(1+o(1))\dfrac{\lambda_g(n)}{\lambda_g( \lambda_g (n))}$ since $$\displaystyle\lim_{n\to\infty}\dfrac{\log_g n}{\lambda_g (n)}\dfrac{ \lambda_g(\lambda_g(n))}{\lfloor \lambda_g(\lambda_g(n))-2\lambda_g(\lambda_g(\lambda_g(n))\rfloor}-1=0.$$
Also, $\dfrac{g^2\lambda_g(n)}{\lambda_g^2(\lambda_g(n))}=o(1)\dfrac{\lambda_g(n)}{\lambda_g( \lambda_g (n))}$ since $$\displaystyle\lim_{n\to\infty}\dfrac{g^2}{ \lambda_g(\lambda_g(n))}=0.$$
\end{proof}

\begin{corollary} \label{corollaireinfini}
For all $g \geq 2$, we have $\displaystyle\lim_{n\to\infty} \dfrac{l_g(n)}{\lambda_g(n)} 
 =1$. (This result is in \cite{Br39} in the case $g=2$.)
\end{corollary}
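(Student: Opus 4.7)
The plan is to deduce the corollary directly from the two bounds already established, by a squeeze argument, with no new combinatorial content.

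First I would obtain the lower bound on the ratio. Theorem \ref{claim:upper} gives $l_g(n) \geq \lceil \log_g n \rceil \geq \log_g n \geq \lfloor \log_g n \rfloor = \lambda_g(n)$, so $\frac{l_g(n)}{\lambda_g(n)} \geq 1$ for every $n \geq 1$.

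Next I would bound the ratio from above using Theorem \ref{claimasymptotique}. Dividing the inequality $l_g(n) \leq \lambda_g(n) + (1+o(1)) \frac{\lambda_g(n)}{\lambda_g(\lambda_g(n))}$ by $\lambda_g(n)$ yields
\begin{equation*}
\frac{l_g(n)}{\lambda_g(n)} \leq 1 + \frac{1+o(1)}{\lambda_g(\lambda_g(n))}.
\end{equation*}
Since $\lambda_g(\lambda_g(n)) \to \infty$ as $n \to \infty$, the right-hand side tends to $1$.

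Combining the two bounds by the squeeze theorem gives $\lim_{n \to \infty} \frac{l_g(n)}{\lambda_g(n)} = 1$. There is no real obstacle here: all the work has been absorbed into Theorem \ref{claimasymptotique}, and the corollary is essentially a restatement of its asymptotic content in a weaker, cleaner form.
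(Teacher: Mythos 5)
Your proposal is correct and matches the paper's own proof in substance: the paper likewise deduces the corollary from Theorem~\ref{claimasymptotique} by observing that the error term divided by $\lambda_g(n)$, namely $(1+o(1))/\lambda_g(\lambda_g(n))$, tends to $0$, with the lower bound $\lambda_g(n)\leq l_g(n)$ coming from Theorem~\ref{claim:upper}. Your write-up just makes the squeeze explicit.
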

\begin{proof}
It is enough to see that $\displaystyle\lim_{n\to\infty} \dfrac{(1+o(1))\lambda_g(n)}{\lambda_g(n)\lambda_g( \lambda_g (n))}=0$.
\end{proof}
Exploiting Erd{\H{o}}s' ideas in the case $g=2$ as in \cite{Er60},
and developing the necessary tools, we show that the main term is larger than $\lambda_g(n)+\dfrac{\lambda_g(n)}{8g\log_e g\lambda_g( \lambda_g (n))}$.
\begin{claim} \label{claimErdos}
Let $g\geq 3$, and let $\varepsilon > 0$. Then,
\begin{align} \label{nombretotal}
\left\vert \left\{ g\mbox{-addition chains } 1=a_0 <\cdots < a_r = n \mbox{ with }\lambda_g(n)=m \mbox{ and } r\leq m +\dfrac{(1-\varepsilon)m}{8g \log_e g \lambda_g(m)} \right\} \right\vert
\end{align}

 $$=\alpha^m$$ for $\alpha<g$ and $m$ large enough. In other words, the number of $g$-addition chains \textit{short enough} is substantially less than $(g-1)g^m$, which is the number of $n$ such that $\lambda_g(n) = m$, for m large enough.
\end{claim}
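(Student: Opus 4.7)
The plan is to imitate the Erd\H{o}s counting argument from \cite{Er60} (established there for $g=2$) in the generalized setting. First, I classify each step $i$ of a chain $1 = a_0 < \cdots < a_r = n$ as \emph{big} when $\lambda_g(a_i) = \lambda_g(a_{i-1}) + 1$ and \emph{small} when $\lambda_g(a_i) = \lambda_g(a_{i-1})$; since $a_i \leq g\,a_{i-1}$ these are the only options. Telescoping $\lambda_g$ from $0$ to $m$ along the chain then forces exactly $m$ big steps and $s := r - m$ small steps, with $s \leq (1-\varepsilon) m /(8 g \log_e g \cdot \lambda_g(m))$ under the hypothesis on $r$.

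Next, I would bound the number of such chains by parameterizing each by (i) the positions of its small steps among the $r$ slots, contributing $\binom{r}{s}$, and (ii) the summand multiset $(j_1 \leq \cdots \leq j_k)$ with $2 \leq k \leq g$ chosen at each step. At a small step, the number of admissible multisets is at most $g\binom{i+g-1}{g} \leq C r^g$ for some constant $C = C(g)$, contributing a factor $(C r^g)^s$ overall. The decisive refinement is at the big steps: I would argue that the constraint $a_i \in [g^{\lambda_{i-1}+1},\, g a_{i-1}]$ admits only a constant $D = D(g) < g$ distinct feasible values for $a_i$, extending the observation from the $g=2$ case that big steps are essentially forced doublings. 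Granting this, the chain count is at most $\binom{r}{s} \cdot D^m \cdot (Cr^g)^s$, whose $\log_g$, using $\log_e g \cdot \lambda_g(m) \sim \log_e m$ and $r \leq 2m$, is bounded by $m \log_g D + (1+o(1))(1-\varepsilon)m/(8 \log_e g) + o(m)$; for $g \geq 3$ this stays strictly below $m$, so the count is $\alpha^m$ with $\alpha := g^{\log_g D + (1-\varepsilon)/(8 \log_e g)} < g$.

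The principal obstacle is proving $D < g$. For $g = 2$ the big-step window $[2^{\lambda_{i-1}+1},\, 2 a_{i-1}]$ is so narrow that the bound is essentially immediate, but for $g \geq 3$ the window $[g^{\lambda_{i-1}+1},\, g a_{i-1}]$ can be wider and a priori accommodates many $\leq g$-summand sums. The strategy is to argue that most such multisets either overshoot $g a_{i-1}$ (and are infeasible) or undershoot $g^{\lambda_{i-1}+1}$ (and would then register as a small step, not a big one), so that comparatively few sums actually land in the big-step window; a careful bookkeeping exploiting the scarcity of small steps should then pin down $D$ as a constant strictly less than $g$. The generous factor of $8$ in the denominator of the hypothesis on $s$ appears precisely to give room for any imprecision in this combinatorial accounting.
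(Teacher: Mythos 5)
Your opening reduction is sound: each step increases $\lambda_g$ by $0$ or $1$, so there are exactly $m$ big steps and $s=r-m$ small ones, and the $(Cr^g)^s$ bound on small-step choices is harmless. But the argument breaks exactly where you flag it: the claim that a big step admits only a constant number $D<g$ of feasible values is not merely unproven, it is false. Whenever $a_{i-1}\geq \frac{g}{g-1}\,g^{\lambda_g(a_{i-1})}$, the sum $a_i=(g-1)a_{i-1}+a_j$ is a legal step (it uses $g$ summands), satisfies $g^{\lambda_g(a_{i-1})+1}\leq a_i\leq g\,a_{i-1}$, and hence registers as a big step for \emph{every} $j<i$; since the $a_j$ are distinct, this already produces at least $i$ distinct feasible values for $a_i$. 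So $D$ grows linearly along the chain, $D^m$ is of order $r^m$, and the proposed bound says nothing. The big/small dichotomy alone cannot be repaired, because it records whether $\lambda_g$ jumps but not how much freedom the decomposition of $a_i$ leaves.

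The paper's proof (Erd\H{o}s's argument, generalized) replaces your $D$ with precisely the missing bookkeeping: it classifies each step by the largest coefficient $g-k$ of $a_{i-1}$ occurring in a decomposition of $a_i$ ($A_0$ for pure $g$-steps, $A_k$ for $1\leq k\leq K$, $B$ for coefficient below $g-K$). A step of type $A_k$ allows at most $r^k$ choices, but pays for this freedom with the two-step growth deficit $a_i\leq (g-k)a_{i-1}+ka_{i-2}\leq g^2\bigl(1-\tfrac{k}{g}+\tfrac{k}{g^2}\bigr)a_{i-2}$. Multiplying these inequalities along the chain and comparing with $a_r^2\geq g^{2m}$ yields the linear constraint \eqref{bornesomme}, which caps the total choice budget $gB+\sum_k kA_k$ by a multiple of $(r-m+\tfrac12)\log_e g$; inserting that cap into the multinomial count gives \eqref{un}--\eqref{quatre} and the stated $\alpha^m$. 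It is this trade-off between number of choices and multiplicative growth deficit -- not any scarcity of big-step values -- that drives the count below $g^m$, and the factor $8$ is absorbed by the constants in \eqref{trois} rather than by slack in a big-step estimate. To fix your write-up, you would need to abandon the constant-$D$ claim and adopt some version of this coefficient-based classification.
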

\begin{proof}
Consider an addition chain $$1=a_0 < \cdots < a_r = n \mbox{ with }\lambda_g(n)=m.$$
Fix a positive integer $K <g$.
Let $A_0$ be the number of $g$-steps in this chain. For such steps, for $i\geq 2$, we have $a_i \leq g^2 a_{i-2}$, and for $i=1$, we have $a_1=ga_0=g$.
For $1 \leq k \leq K$, let $A_k$ be the number of steps $i$ such that $$a_i=(g-k)a_{i-1}+a_{j_1}+\cdots+a_{j_h},$$ $a_{i-1}>a_{j_1} \geq \cdots \geq a_{ j_h},$ $ h \leq k$ and where $g-k$ is the largest coefficient of $a_{i-1}$ among the coefficients of $a_{i-1}$ in the different possible decompositions of $a_i$. For such steps, for $i\geq 2$, $$a_i \leq (g-k)a_{i-1}+ka_{i-2} \leq (g(g-k)+k)a_{i-2}.$$ For $i=1$, $a_i \leq (g-k)a_0=g-k \leq (g(g-k)+k)$.
Finally, let $B$ be the number of steps $i$ such that $a_i=ca_{i-1}+a_{j_1}+\cdots+a_{j_h},$ $c < g-K$, $a_{i-1}>a_{j_1} \geq \cdots \geq a_{ j_h},$ $ h +c\leq g$  and where $c$ is the largest coefficient of $a_{i-1}$ among the coefficients of $a_{i-1}$ in the different possible decompositions of $a_i$. For such steps, for $i\geq 2$, $$a_i < (g-K)a_{i-1}+Ka_{i-2} \leq (g(g-K)+K)a_{i-2}.$$ For $i=1$, we have $a_i < (g-K)a_0=g-K \leq (g(g-K)+K)$.
Now, $r=A_0 + B+\sum_{k=1}^{K}A_k $.\\
We have one possibility for a step accounted for in $A_0$, at most
$r^k$ possibilities (regardless of where the step occurs)
for a step accounted for in $A_k$, and at most $r^g$
possibilities for a step accounted for in $B$.
Hence, 
$$g^{2m} \leq a_r^2 \leq g^{2A_0+1} (g(g-K)+K)^B \prod_{k=1}^{K} (g(g-k)+k)^{A_k}  = gg^{2r} (1-\frac{K}{g}+\frac{K}{g^2})^B \prod_{k=1}^K (1-\frac{k}{g}+\frac{k}{g^2})^{A_k}.$$
Taking logarithm in basis $e$, and using $$\log_e (1-\frac{k}{g}+\frac{k}{g^2})\leq -\dfrac{k}{g}+\dfrac{k}{g^2}=\dfrac{k-gk}{g^2} \hbox{  and  } \log_e(1-\frac{K}{g}+\frac{K}{g^2}) \leq \dfrac{K-gK}{g^2},$$ 
we get
\begin{align} \label{bornesomme}
\dfrac{gK-K}{g^2} B +\sum_{k=1}^K  \dfrac{gk-k}{g^2} A_k \leq 2(r-m+\frac{1}{2}) \log_e g.
\end{align}
\begin{align} \label{un}
\eqref{nombretotal} \leq \sum_{\substack{A_0+B+\sum_{k=1}^K A_k =r \\  \frac{gK-K}{g^2}B+\sum_{k=1}^K  \frac{gk-k}{g^2}A_k  \leq 2(r-m+\frac{1}{2})\log_e g} } \dfrac{r!}{A_0! B! \prod_{k=1}^K A_k! } r^{gB} \prod_{k=1}^K r^{kA_k} .
\end{align} 
The number of terms in the sum \eqref{un} is bounded by $3g(K+1)(r-m+\frac{1}{2})\log_e g$ since $B,A_k$, $k=1, \cdots K$ are bounded by $3g(r-m+\frac{1}{2})\log_e g$.
Also, $$\frac{r!}{A_0!} \leq r^{r-A_0}=r^{B+\sum_{k=1}^K A_k }.$$
Finally, taking into account that $$r^{(K-\frac{K}{g})B+\sum_{k=1}^K (k-\frac{k}{g})A_k} \leq r^{2(r-m+\frac{1}{2})g \log_e g},$$
we obtain:
\begin{align}\label{deux}
\eqref{un} \leq 3g(K+1)(r-m+\frac{1}{2})\log_e g \times r^{B+\sum_{k=1}^K A_k } \times r^{2(r-m+\frac{1}{2})g \log_e g} r^{(g-K+\frac{K}{g})B+\sum_{k=1}^K  \frac{k}{g}A_k}
\end{align}
Choosing $K=\lceil \frac{g}{2} \rceil$ implies
\begin{align*}
6(r-m+\frac{1}{2})g \log_e g \geq 3(K-\frac{K}{g})B+\sum_{k=1}^K 3(k-\frac{k}{g})A_k
\geq (g-K+\frac{K}{g}+1)B + (\frac{k}{g}+1)A_k
\end{align*}
in both cases when $g$ is even or odd.
Therefore, 
\begin{align} \label{trois}
\eqref{deux} \leq 
3(K+1)(r-m)\log_e g \times r^{8(r-m+\frac{1}{2})g \log_e g}
\end{align}
Upon taking $\log_g$ in order to compare with $\log_g((g-1)g^m)=\log_g(g-1)+m$, we get:
\begin{align} \label{quatre}
\log_g \eqref{trois}= \log_g \left( 3g(K+1)(r-m)\log_e g \right) + (8(r-m+\frac{1}{2})g \log_e g)\log_g r
\end{align}
Using $r-m \leq (1- \varepsilon) \dfrac{m}{8g \log_e g \lambda_g(m)}$, and $r\leq 2m$, and letting $m$ go to infinity,
we see that \eqref{quatre} is less than $m$.
\end{proof}

\begin{corollary}
Let $g \geq 3$. For almost all $n$,
$$l_g(n) \geq \lambda_g(n)+\dfrac{\lambda_g(n)}{8g log_eg\lambda_g(\lambda_g (n))},$$
i.e the proportion of integers not satisfying this inequality goes to zero when $n$ goes to infinity.
\end{corollary}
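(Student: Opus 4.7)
The plan is to translate the bound of Theorem~\ref{claimErdos} on the number of short $g$-addition chains into a density statement about the integers that can be their endpoints. Fix $\varepsilon > 0$, and for each $m \geq 1$ call an integer $n$ \emph{short at level $m$} if $\lambda_g(n) = m$ and
\[ l_g(n) \;\leq\; m + \frac{(1-\varepsilon)\,m}{8 g \log_e g\,\lambda_g(m)}. \]
Every such $n$ is the final term of some $g$-addition chain meeting this length bound, for example a minimal chain witnessing $l_g(n)$. Since distinct values of $n$ correspond to chains with distinct final terms, the assignment is injective, and Theorem~\ref{claimErdos} delivers
\[ \#\{n : n \text{ short at level } m\} \;\leq\; \alpha^m \]
for some $\alpha = \alpha(\varepsilon) < g$ and all sufficiently large $m$.

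The density conclusion then reduces to a routine geometric estimate. There are exactly $(g-1)g^m$ integers with $\lambda_g(n)=m$, namely those in $[g^m, g^{m+1})$, so the proportion of short $n$ at level $m$ is at most
\[ \frac{\alpha^m}{(g-1)g^m} \;=\; \frac{1}{g-1}\!\left(\frac{\alpha}{g}\right)^{\!m}, \]
which decays geometrically. Summing over levels $m \leq \lambda_g(N)$, the number of short integers in $[1,N]$ is $O(\alpha^{\lambda_g(N)})$, while $N \geq g^{\lambda_g(N)}$, so the density of short integers up to $N$ is $O\!\left((\alpha/g)^{\lambda_g(N)}\right) \to 0$ as $N \to \infty$. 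Thus for every fixed $\varepsilon > 0$, almost all $n$ satisfy
\[ l_g(n) \;>\; \lambda_g(n) + \frac{(1-\varepsilon)\,\lambda_g(n)}{8 g \log_e g\,\lambda_g(\lambda_g(n))}, \]
which is the statement of the corollary (with the $(1-\varepsilon)$ factor coming from the hypothesis of Theorem~\ref{claimErdos}).

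The only nontrivial point is the injection from short integers to short chains, which is immediate from the definition of $l_g(n)$ as the length of a shortest $g$-addition chain ending at $n$. Beyond this verification, the proof is pure counting, and I expect no substantive obstacle; the geometric decay of $(\alpha/g)^m$ is the engine that drives everything.
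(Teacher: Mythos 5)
Your proposal is correct and is exactly the derivation the paper intends: the corollary is stated without proof, but the remark inside Theorem~\ref{claimErdos} that $\alpha^m$ is ``substantially less than $(g-1)g^m$, which is the number of $n$ such that $\lambda_g(n)=m$'' points to precisely your injection-from-integers-to-minimal-chains plus geometric-decay argument (with the one routine check, which you note, that a minimal chain may be taken strictly increasing so that it is among the chains counted by the theorem). The only caveat is one you already flag: what follows from Theorem~\ref{claimErdos} is the lower bound with the factor $(1-\varepsilon)$ for each fixed $\varepsilon>0$, so the corollary as literally printed (with no $\varepsilon$) is a slight overstatement inherited from the paper rather than a defect of your argument.
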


\section{Open questions}\label{sect:conclusion}

Many questions regarding $2$-addition chains remain unsettled. Their
$g$-analogs seem interesting and are at least as hard.

Recall the Scholz-Brauer's conjecture~\cite{Sc37}, concerned with the
worst case behavior
of the $2$-ary method when $g=2$: the conjecture states that for all $n\geq 1$,
$$l_2(2^n-1) \leq n-1 +l_2(n).$$ 
Brauer 
\cite{Br39}
and Hansen 
\cite{Ha59} established a similar inequality, where certain restrictions are imposed on the $2$-addition chain, 
yet the conjecture remains open.
What can we say about $$l_g((g^n-1) + (g + 2g^2 + \cdots + (g-2)g^{g-2}))$$ which seems to be the worst case
for the $g$-ary method?

The conjecture $$l_2(n) \geq \lambda_2(n) + \lceil
\log_2(\mu_2(n))\rceil$$ also remains open, although Sch\"{o}nhage
showed that $$l_2(n) \geq \lceil \log_2(n) + \log_2(\mu_2(n)) -2.13
\rceil$$ in \cite{Sc75}.
Can we prove a similar result for arbitrary $g$?

The functions $d_g(r)= \left\vert \left\{ \mbox { solutions to } l_g(n)=r \right\} \right\vert$, $c_g(r)= \min \left\{n \mid l_g(n)=r \right\}$, as well as 
$NMC_g(n)=\left\vert \left\{g\mbox{-addition chains of minimal length for  }n \right\} \right\vert,$ would be interesting to study; is $d_g(r)$ increasing? How does it evolve asymptotically?
These functions are not well understood, even in the case $g=2$.

Knuth's interest~\cite{Kn69} in addition chains arose from the
fact that $l_2(d)$ is precisely the optimum number of steps required by a
\emph{straight-line $\{\times \}$-program}
computing the univariate polynomial $q(x)=x^d$
out of the initial polynomial $q_0(x)=x$:
\begin{align*}
\text{step $1$:\ \ } q_1 &\leftarrow q_0 \times q_0\\
&\ \ \vdots \\
\text{step $k$:\ \ } q_k &\leftarrow q_{k_1} \times q_{k_2},\ \ \  k_1,k_2<k,\\
&\ \ \vdots \\
\text{step $l_2(d)$:\ \ } q &\leftarrow q_i \times q_j,\ \ \ i,j<l_2(d).
\end{align*}
Obviously, $l_g(d)$ for $g>2$ captures the optimum length of such a
$\{\times\}$-program for $x^d$ in which each step now carries out the
product of up to $g$ factors.
More interestingly,
$\{+,-,\times\}$-programs, in
which the initial polynomials are $1$ and $x$ and a step can now
perform $q_i+q_j$ or $q_i-q_j$ or $q_i\times q_j$, are
a crucial object of study in algebraic complexity theory~\cite{buclsh97}.
A peripheral yet nagging question in that model
has remained open since the 1970's
\cite[p.\ 26]{bomu75}:
does there exist a polynomial $q\in {\mathbb
  Z}[x]$ computable by a $\{+,-,\times\}$-program
that uses \emph{fewer} than $l_2(\text{degree}(q))$ product steps?
The answer at first glance is a resounding ``no'', until one realizes that
cancellation of terms of degree higher than $\text{degree}(q)$ could be helpful.
Such a possibility is tied to the behavior of the $l_2(d)$ function.
The same question now arises in the setting generalized to $g$-ary $\{+,-,\times\}$-programs and $l_g(d)$ for $g>2$.

\section{Acknowledgment}
We would like to thank Andrew Granville for his contribution to the
proof of Theorem \ref{claimErdos} and Michel Boyer for some meticulous
remarks on the document \cite{El11} 
which was 
the starting point of the present paper.
\urlstyle{same}

\end{document}